\newtheorem{theorem}{Theorem}
\newtheorem{proposition}{Proposition}
\newtheorem{lemma}{Lemma}
\newcommand{\thmref}[1]{Theorem~\ref{thm:#1}} 
\newcommand{\lemref}[1]{Lemma~\ref{lem:#1}} 
\newcommand{\propref}[1]{Proposition~\ref{prop:#1}} 
\newcommand{\secref}[1]{Section~\ref{sec:#1}} 
\newcommand{\eqnref}[1]{(\ref{eq:#1})} 
\def\be{\begin{equation} }
\def\ee{ \end{equation}}
\def\ben{\begin{equation*}}
\def\een{\end{equation*}}
\def\bea{\begin{eqnarray}}
\def\eea{\end{eqnarray}}
\def\ee{\end{eqnarray}}
\def\bean{\begin{eqnarray*}}
\def\eean{\end{eqnarray*}}
\newcommand\ignore[1]{}
\def\R{\mathbb{R}} 
\def\C{\mathbb{C}} 
\def\N{\mathbb{N}} 
\newcommand{\Ex}[1]{\mathbb{E}\left[#1\right]} 
\newcommand{\Prwo}{\mathbb{P}} 
\renewcommand{\Pr}[1]{\mathbb{P}\left(#1\right)} 
\newcommand{\theita}[1]{\Theta\left(#1\right)}
\def\sF{\mathcal{F}}
\def\sS{\mathcal{S}}
\newcommand\QED{\ifhmode\allowbreak\else\nobreak\fi
\quad\nobreak$\Box$\medbreak}
\newcommand{\proofstart}{\par\noindent\sl Proof:\rm\enspace}
\newcommand{\proofend}{\QED\par}
\newenvironment{proof}{\proofstart}{\proofend}
\def\eps{\epsilon}
\def\tr{{\rm Tr}}
\begin{document}

\title{Sums of random Hermitian matrices and an inequality by
Rudelson}
\author{Roberto Imbuzeiro Oliveira\thanks{IMPA, Rio de Janeiro, RJ, Brazil, 22430-040. \texttt{rimfo@impa.br}}} \maketitle
\begin{abstract}We give a new, elementary proof of a key inequality used by Rudelson in the derivation of his well-known bound for random sums of rank-one operators. Our approach is based on Ahlswede and Winter's technique for proving operator Chernoff bounds. We also prove a concentration inequality for sums of random matrices of rank one with explicit constants.\end{abstract}

\section{Introduction}

This note mainly deals with estimates for the operator norm $\|Z_n\|$ of random sums
\begin{equation}\label{eq:defZ_n}Z_n\equiv \sum_{i=1}^n\eps_iA_i\end{equation}
of deterministic Hermitian matrices $A_1,\dots,A_n$ multiplied by random coefficients. Recall that a {\em Rademacher sequence} is a sequence $\{\eps_i\}_{i=1}^n$ of i.i.d. random variables with $\eps_1$ uniform over $\{-1,+1\}$. A
{\em standard Gaussian sequence} is a sequence i.i.d. standard Gaussian random variables. Our main goal is to prove the following result.

\begin{theorem}[proven in \secref{mainproof}]\label{thm:rudelson} Given positive integers $d,n\in\N$, let $A_1,\dots,A_n$ be deterministic $d\times d$ Hermitian matrices and $\{\eps_i\}_{i=1}^n$ be either a Rademacher sequence or a standard Gaussian sequence. Define
$Z_n$ as in \eqnref{defZ_n}. Then for all $p\in [1,+\infty)$,
$$\Ex{\|Z_n\|^p}^{1/p}\leq (\sqrt{2\ln(2d)}+C_p)\,\left\|\sum_{i=1}^n A_i^2\right\|^{1/2}$$
where
$$C_p\equiv \left(p\int_{0}^{+\infty}t^{p-1}e^{-\frac{t^2}{2}}\,dt\right)^{1/p}\;(\leq c\sqrt{p}\mbox{ for some universal $c>0$).}$$
\end{theorem}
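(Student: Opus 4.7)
The plan is to follow the Ahlswede--Winter / Laplace transform paradigm: first obtain a sub-Gaussian tail bound on $\|Z_n\|$, then integrate to derive the $L^p$ estimate. Since $\eps_i\eqdist -\eps_i$, one has $Z_n\eqdist -Z_n$, and $\|Z_n\|=\max(\lambda_{\max}(Z_n),\lambda_{\max}(-Z_n))$, so a union bound reduces matters to controlling $\Pr{\lambda_{\max}(Z_n)\geq t}$. The matrix Markov inequality
$$\Pr{\lambda_{\max}(Z_n)\geq t}\leq e^{-\theta t}\,\Ex{\tr e^{\theta Z_n}}\qquad(\theta>0)$$
then reduces the problem to bounding the matrix moment generating function $\Ex{\tr e^{\theta Z_n}}$.

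The heart of the argument is to prove
$$\Ex{\tr e^{\theta Z_n}}\leq d\,\exp\!\left(\frac{\theta^2}{2}\left\|\sum_{i=1}^n A_i^2\right\|\right).$$
The starting ingredient is the operator bound $\Ex{e^{\theta\eps_i A_i}}\preceq e^{\theta^2 A_i^2/2}$, which follows spectrally from $\cosh(x)\leq e^{x^2/2}$ in the Rademacher case and holds with equality in the Gaussian case. I would then iterate in Ahlswede--Winter fashion, conditioning on $\eps_1,\dots,\eps_{n-1}$ and applying Golden--Thompson $\tr e^{X+Y}\leq\tr(e^Xe^Y)$ to peel off one factor at a time. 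The naive finish via $\tr(AB)\leq\|B\|\tr(A)$ only delivers the weaker quantity $\sum_i\|A_i^2\|$, so to retain the sharper $\|\sum_i A_i^2\|$ I expect to need a more careful iteration: for instance, an induction proving $\Ex{\tr e^{\theta Z_n}}\leq \tr e^{(\theta^2/2)\sum_i A_i^2}$ by combining Golden--Thompson with a convexity or rearrangement step that keeps the $A_i^2$'s inside a common exponential. Choosing $\theta=t/\sigma^2$ with $\sigma^2:=\|\sum_i A_i^2\|$ then yields $\Pr{\|Z_n\|\geq t}\leq 2d\,e^{-t^2/(2\sigma^2)}$.

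To pass from this tail bound to the $L^p$ estimate, I would shift: using $(\sqrt{2\ln(2d)}+s)^2\geq 2\ln(2d)+s^2$ for $s\geq 0$,
$$\Pr{\|Z_n\|\geq \sigma\sqrt{2\ln(2d)}+\sigma s}\leq 2d\,e^{-(\sqrt{2\ln(2d)}+s)^2/2}\leq e^{-s^2/2}.$$
Integrating the $p$-th moment of the positive part gives $\Ex{((\|Z_n\|/\sigma-\sqrt{2\ln(2d)})_+)^p}\leq p\int_0^\infty s^{p-1}e^{-s^2/2}\,ds=C_p^p$, and the pointwise bound $\|Z_n\|\leq \sigma\sqrt{2\ln(2d)}+\sigma(\|Z_n\|/\sigma-\sqrt{2\ln(2d)})_+$ together with Minkowski's inequality finishes the proof. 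The main obstacle is the sharp MGF bound in the previous paragraph: producing $\|\sum_i A_i^2\|$ rather than $\sum_i\|A_i^2\|$ is exactly what distinguishes the desired statement from the output of textbook Ahlswede--Winter, and achieving it by elementary means, without invoking Lieb's concavity theorem, is the novelty the abstract promises.
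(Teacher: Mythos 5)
Your outer scaffolding is exactly the paper's: symmetry of $Z_n$, the union bound, the matrix Markov/Bernstein reduction to $\Ex{\tr e^{\theta Z_n}}$, and the tail-to-moment integration via $(\sqrt{2\ln(2d)}+s)^2\geq 2\ln(2d)+s^2$ are all correct and coincide with what is done in \secref{mainproof}. But the proof has a genuine gap at precisely the point you flag as ``the main obstacle'': you correctly identify the needed intermediate inequality $\Ex{\tr e^{\theta Z_n}}\leq \tr\exp\bigl(\tfrac{\theta^2}{2}\sum_i A_i^2\bigr)$ (this is \lemref{main} of the paper), and you correctly observe that the textbook Ahlswede--Winter iteration only yields $\sum_i\|A_i\|^2$, but the ``convexity or rearrangement step that keeps the $A_i^2$'s inside a common exponential'' is never specified, and no naive rearrangement will do: Golden--Thompson cannot be undone and fails for three summands, so once you have peeled off $e^{\theta\eps_nA_n}$ and replaced its expectation by $e^{\theta^2A_n^2/2}$, there is no way to push that factor back inside the exponential alongside $Z_{n-1}$ to continue the induction. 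As stated, your argument proves only the weaker bound with $\sum_i\|A_i\|^2$, which (as the Wigner example in \secref{improvement} shows) can be off by a factor of order $d$ in the exponent.

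The missing idea is to build the compensators into the exponent \emph{from the start} and peel them off one at a time. Set $D_0\equiv\frac{s^2}{2}\sum_{i=1}^nA_i^2$ and $D_j\equiv D_0+\sum_{i=1}^j\bigl(s\eps_iA_i-\frac{s^2}{2}A_i^2\bigr)$, so that $D_n=sZ_n$. For each $j$, Golden--Thompson (with only the two summands $D_{j-1}$ and $s\eps_jA_j-\frac{s^2}{2}A_j^2$), independence, and the product rule give
$$\Ex{\tr e^{D_j}}\leq\tr\left(\Ex{e^{D_{j-1}}}\,\Ex{\exp\left(s\eps_jA_j-\frac{s^2}{2}A_j^2\right)}\right),$$
and the crucial point is that $s\eps_jA_j$ and $A_j^2$ commute \emph{pointwise}, so the last expectation factors as $\Ex{e^{s\eps_jA_j}}e^{-s^2A_j^2/2}\preceq I$ (equality of the two factors in the Gaussian case; via $\cosh(x)\leq e^{x^2/2}$ and spectral mapping in the Rademacher case). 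Monotonicity of $X\mapsto\tr(XC)$ for $C\succeq0$ then yields $\Ex{\tr e^{D_j}}\leq\Ex{\tr e^{D_{j-1}}}$, and after $n$ steps only $\tr e^{D_0}=\tr\exp\bigl(\frac{s^2}{2}\sum_iA_i^2\bigr)$ remains; bounding this by $d\,e^{s^2\sigma^2/2}$ with $\sigma^2=\|\sum_iA_i^2\|$ recovers your claimed tail bound, after which your integration step goes through verbatim. Without this (or an equivalent mechanism such as Lieb's theorem, which you explicitly decline to use), the central claim of your proposal is asserted rather than proved.
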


For $d=1$, this result corresponds to the classical
Khintchine inequalities, which give sub-Guassian bounds for the
moments of $\sum_{i=1}^n\eps_ia_i$ $(a_1,\dots,a_n\in\R)$.
\thmref{rudelson} is implicit in Section $3$ of Rudelson's paper
\cite{Rudelson_RandomIsotropic}, albeit with non-explicit constants. The main Theorem in that paper is the following inequality, which is a simple corollary of \thmref{rudelson}: if $Y_1,\dots,Y_n$ are i.i.d. random (column) vectors in $\C^d$ which are isotropic (i.e $\Ex{Y_1Y_1^*}=I$, the $d\times d$ identity matrix), then:
\begin{equation}\label{eq:rudelsonsbound}\Ex{\left\|\frac{1}{n}\sum_{i=1}^nY_iY_i^* - I\right\|}\leq C\, \Ex{|Y_1|^{\log n}}^{1/\log n}\sqrt{\frac{\log d}{n}}\end{equation} for some universal $C>0$, whenever the RHS of the above inequality is at most $1$. This important result has been applied to several different problems, such as bringing a
convex body to near-isotropic position \cite{Rudelson_RandomIsotropic}; the analysis of for low-rank approximations of matrices \cite{RudelsonVershynin,HalkoEtAl_StochasticLargeMatrices} and graph sparsification \cite{SpielmanSrivastava_SparsificationByEffectiveResistances}; estimating of singular values of matrices with independent rows \cite{MendelsonPajor_SingularValues}; analysing compressive sensing \cite{CandesRomberg_CompressiveSensing}; and related problems in Harmonic
Analysis \cite{Vershynin_RandomFrameErasures,Tropp_ConditioningSubdictionaries}.

The key ingredient of the
original proof of \thmref{rudelson} is a non-commutative Khintchine inequality by Lust-Picard and
Pisier \cite{LustPicardPisier_Khintchine}. This states
that there exists a universal $c>0$ such that for all $Z_n$ as in
the Theorem, all $p\geq 1$ and all $d\times d$ matrices
$\{B_i,D_i\}_{i=1}^n$ with $B_i+D_i=A_i$, $1\leq i\leq n$,
$$\Ex{\left\|Z_n\right\|^p_{S^p}}^{1/p}\leq c\sqrt{p}\,\left(\left\|\sum_{i=1}^n B_iB_i^*\right\|^{1/2}_{S^p}+\left\|\sum_{i=1}^n D_i^*D_i\right\|^{1/2}_{S^p}\right),$$
where $\|\cdot\|_{S^p}$ denotes the {\em $p$-th Schatten norm}:
$\|A\|^p_{S^p}\equiv \tr[(A^*A)^{p/2}].$ \ignore{We note that
this result does not require that the $A_j$ be Hermitian.
It is not hard to deduce Rudelson's lemma from this by taking
suitably large $p$.} Unfortunately, the proof of the
Lust-Picard/Pisier inequality employs language and tools from
non-commutative probability that are rather foreign to most
potential users of \eqnref{rudelsonsbound}.

This note presents an elementary proof of \thmref{rudelson} that
bypasses the above inequality. Our argument is based on an improvement of the
methodology created by Ahlswede and Winter
\cite{AhlswedeWinter_StrongConverse} in order to prove their {\em
operator Chernoff bound}, which also has many applications
e.g. \cite{RussellLandau_AlonRoichman} (the improvement is discussed in \secref{improvement}). This approach only requires elementary
facts from Linear Algebra and Matrix Analysis. The most complicated
result that we use is the Golden-Thompspon inequality
\cite{Golden_GTIneq,Thompson_GTIneq}:
\begin{equation}\label{eq:GTineq}\forall d\in\N,\, \forall \mbox{ $d\times d$ Hermitian matrices }A,B,\,
\tr(e^{A+B})\leq \tr(e^{A}e^{B}).\end{equation} The elementary proof of this classical inequality is sketched in \secref{GTineq}
below.

We have already noted that Rudelson's bound \eqnref{rudelsonsbound} follows simply from \thmref{rudelson}; see \cite[Section 3]{Rudelson_RandomIsotropic} for detais. Here we prove a concentration lemma corresponding to that result under the stronger assumption that $|Y_1|$ is a.s. bounded. While similar results have appeared in other papers
\cite{MendelsonPajor_SingularValues,RudelsonVershynin,Vershynin_RandomFrameErasures}, our proof is simpler and gives explicit (albeit quite large) constants.

\begin{lemma}[Proven in \secref{concentrationHilbert}]\label{lem:concentrationHilbert}Let $Y_1,\dots,Y_n$ be i.i.d. random column vectors in $\C^d$ with $|Y_1|\leq M$ almost surely and $\|\Ex{Y_1Y_1^*}\|\leq 1$. Then:
$$\forall t\geq 0,\Pr{\left\|\frac{1}{n}\sum_{i=1}^nY_iY_i^* - \Ex{Y_1Y_1^*}\right\|\geq t}\leq (2n)^2 e^{-\frac{nt^2}{16M^2+8M^2t}}.$$
\end{lemma}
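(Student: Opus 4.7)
The plan is to combine a symmetrization step with a conditional application of \thmref{rudelson}, exploiting the rank-one structure of $Y_iY_i^*$ to reduce the effective dimension from $d$ to $n$, and then convert the resulting moment bound into the stated Bernstein-type tail via Markov's inequality.

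\textbf{Step 1 (Symmetrization).} Write $S_n := \sum_{i=1}^n (Y_iY_i^* - \Ex{Y_1Y_1^*})$. Via independent copies $\tilde Y_i$ and Rademacher signs $\eps_i$, the standard symmetrization trick gives, for every $p \geq 1$,
$$\Ex{\|S_n\|^p}^{1/p} \leq 2\,\Ex{\big\|\textstyle\sum_i \eps_i Y_iY_i^*\big\|^p}^{1/p}.$$

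\textbf{Step 2 (Conditional Rudelson bound with effective dimension $n$).} Conditioning on $Y_1,\dots,Y_n$, each $Y_iY_i^*$ is deterministic, rank one, and supported on the (random) subspace $V := \mathrm{span}(Y_1,\dots,Y_n)$ of dimension at most $n$. Since the operator norm in $\C^d$ of a matrix supported on $V$ equals its operator norm after restriction to $V$, \thmref{rudelson} applies with $d$ replaced by $\dim V \leq n$:
$$\Ex{\big\|\textstyle\sum_i \eps_i Y_iY_i^*\big\|^p \mid Y_1,\dots,Y_n}^{1/p} \leq (\sqrt{2\ln(2n)} + C_p)\,\big\|\textstyle\sum_i |Y_i|^2\, Y_iY_i^*\big\|^{1/2} \leq (\sqrt{2\ln(2n)} + C_p)\, M\, \big\|\textstyle\sum_i Y_iY_i^*\big\|^{1/2},$$
using $(Y_iY_i^*)^2 = |Y_i|^2\, Y_iY_i^* \preceq M^2\, Y_iY_i^*$ and $|Y_i| \leq M$.

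\textbf{Step 3 (Self-bounding).} Let $R_p := \Ex{\|S_n\|^p}^{1/p}$ and $K_p := 2M(\sqrt{2\ln(2n)} + C_p)$. Taking expectations in Step 2, using the triangle inequality $\|\sum_i Y_iY_i^*\| \leq \|S_n\| + n\|\Ex{Y_1Y_1^*}\| \leq \|S_n\| + n$, Minkowski, and Lyapunov's inequality $\|\cdot\|_{L^{p/2}} \leq \|\cdot\|_{L^p}$, one obtains $\Ex{\|\sum_i Y_iY_i^*\|^{p/2}}^{1/p} \leq \sqrt{R_p + n}$. Combined with Step 1 this gives
$$R_p \leq K_p\,\sqrt{R_p + n},$$
which solves to $R_p \leq K_p^2 + K_p \sqrt{n}$.

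\textbf{Step 4 (Markov to tail).} By Markov's inequality, $\Pr{\|S_n\| \geq nt} \leq (R_p/(nt))^p$. Choose $p := \max\{2\ln(2n),\, nt^2/(16M^2 + 8M^2 t)\}$. When $p = 2\ln(2n)$, one has $nt^2 \leq 2\ln(2n)(16M^2 + 8M^2 t)$, so the target bound $(2n)^2\, e^{-nt^2/(16M^2+8M^2 t)}$ already exceeds $1$ and the claim is trivial. Otherwise $p = nt^2/(16M^2 + 8M^2 t) \geq 2\ln(2n)$, and then $K_p$ is of order $M\sqrt p$ (the $\sqrt{2\ln(2n)}$ term being absorbed by $\sqrt p$); routine Bernstein bookkeeping yields $R_p/(nt) \leq (2n)^{2/p}/e$, so raising to the $p$-th power produces exactly the required $(2n)^2 e^{-p}$.

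\textbf{Main obstacle.} The subtle part is the constant tracking in Step 4 — tuning $p$ and the auxiliary constants so that the Bernstein denominator $16M^2 + 8M^2 t$ comes out cleanly and so that the $\sqrt{2\ln(2n)}$ in $K_p$ is absorbed into a tidy $(2n)^2$ prefactor via $e^{2\ln(2n)}$. A secondary but conceptually crucial point is the dimensional reduction in Step 2: without using the fact that the random matrices all live in an $n$-dimensional subspace, \thmref{rudelson} would produce a $\sqrt{\ln d}$ term and the final bound would depend on the ambient dimension.
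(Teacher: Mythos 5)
Your Steps 1--3 are correct, and the two devices there --- symmetrization and the conditional use of the matrix bound on the span of $Y_1,\dots,Y_n$ (dimension $\leq n$) together with $(Y_iY_i^*)^2=|Y_i|^2Y_iY_i^*\preceq M^2Y_iY_i^*$ --- are exactly the ones the paper uses. The genuine gap is Step 4. The inequality you call routine, $R_p/(nt)\leq (2n)^{2/p}e^{-1}$ with $p=nt^2/(16M^2+8M^2t)$, does not follow from Step 3 and is false in part of the admissible range. All Step 3 gives you is $R_p\leq K_p^2+K_p\sqrt{n}$ with $K_p=2M(\sqrt{2\ln(2n)}+C_p)$, and since $C_p^p=2^{p/2}\Gamma(p/2+1)$ one has $C_p\geq\sqrt{p/e}$. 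Take $t=1$ and parameters with $p=n/(24M^2)=2\ln(2n)$ (a perfectly admissible point of your ``main branch''): then $(2n)^{2/p}=e$, so you would need $K_p^2+K_p\sqrt{n}\leq nt=n$, whereas $K_p\geq 2M(1+e^{-1/2})\sqrt{p}\approx 3.2M\sqrt{p}$ forces $K_p^2+K_p\sqrt{n}\geq 10.3M^2p+3.2M\sqrt{pn}\approx (0.43+0.66)\,n>n$. So the Markov step cannot be closed with your choice of $p$; the losses you incur (the factor $2$ from symmetrizing moments, $C_p\asymp\sqrt{p}$, the quadratic solve, and the factor $e$ from Markov) mean the natural output of this route is a Bernstein-type bound of the same shape but with worse explicit constants, and since the lemma asserts the specific constants $16M^2+8M^2t$ and $(2n)^2$, this is a real gap rather than bookkeeping. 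Recovering those exact constants by re-optimizing $p$ is at best delicate and is nowhere carried out in your proposal.

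For comparison, the paper never converts \thmref{rudelson} into $L^p$ moments here. It works with the exponential moment $\phi(s)=\Ex{\exp\left(s\left\|\frac1n\sum_{i=1}^nY_iY_i^*-\Ex{Y_1Y_1^*}\right\|\right)}$, symmetrizes inside the exponential, and applies \lemref{main} (the trace--mgf inequality, not the moment form of the theorem) conditionally on the span $\sS$; the key point is that the resulting norm $\left\|\frac{2s^2}{n^2}\sum_i(Y_iY_i^*)^2\right\|$ is bounded back by the deviation itself, yielding the self-referential inequality \eqnref{goalconcentration}, $\phi(s)\leq 2n\,e^{2M^2s^2/n}\phi(2M^2s^2/n)$. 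This is resolved by Jensen ($\phi(2M^2s^2/n)\leq\phi(s)^{2M^2s/n}$ for $2M^2s/n\leq1$) and then a Chernoff bound with $s=tn/(8M^2+4M^2t)$, which is precisely what produces $(2n)^2e^{-nt^2/(16M^2+8M^2t)}$ with no lossy tail-to-moment conversions. To fix your argument, either adopt this mgf/fixed-point scheme in place of Step 4, or weaken the constants in the statement you claim to prove.
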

In particular, a calculation shows that:
$$\left\|\frac{1}{n}\sum_{i=1}^nY_iY_i^* - \Ex{Y_1Y_1^*}\right\|<\eps(n,M)\equiv M\,\sqrt{\frac{72\ln n + 48\ln 2}{n}}\mbox{ with probability }\geq 1-\frac{1}{n}$$
whenever $\eps(n,M)\leq 1$. A key feature both of this Lemma is that the ambient dimension $d$ plays no direct role in the bound. In fact, the same result holds for $Y_i$ taking values in a separable Hilbert space (as in the last section of \cite{MendelsonPajor_SingularValues}).

To conclude the introduction, we present an open problem: is it possible to improve upon Rudelson's bound under further assumptions? There is some evidence that the dependence on $\ln(d)$ in the Theorem, while necessary in general \cite[Remark
3.4]{RudelsonVershynin}, can sometimes be removed. For instance, Adamczak et
al. \cite{AdamczakElAl_ConvexIsotropic} have improved upon
Rudelson's original application of \thmref{rudelson} to convex
bodies, obtaining exactly what one would expect in the
absence of the $\sqrt{\log (2d)}$ term. Another
setting where our bound is a $\theita{\sqrt{\ln d}}$ factor away
from optimality is that of more classical random matrices (cf. the end of
\secref{improvement} below). It would be interesting if one could
sharpen the proof of \thmref{rudelson} in order to reobtain these
results. [Related issues are raised by Vershynin
\cite{Vershynin_NormOfProduct}.]

\section{Preliminaries}

We let $\C^{d\times d}_{\rm Herm}$ denote the set of
$d\times d$ Hermitian matrices, which is a subset of the set $\C^{d\times d}$ of all $d\times d$ matrices with complex entries. The {\em spectral theorem} states
that all $A\in\C^{d\times d}_{\rm Herm}$ have $d$ real eigenvalues
(possibly with repetitions) that correspond to an orthonormal set of
eigenvectors. $\lambda_{\max}(A)$ is the largest eigenvalue of $A$.
The spectrum of $A$, denoted by ${\rm spec}(A)$, is the multiset of
all eigenvalues, where each eigenvalue appears a number of times
equal to its multiplicity. We let
$$\|C\|\equiv \max_{v\in\C^d\,|v|=1}|Cv|$$
denote the operator norm of $C\in\C^{d\times d}$ ($|\cdot|$ is the Euclidean
norm). By the spectral theorem,
$$\forall A\in \C^{d\times d}_{\rm Herm},\,\|A\| = \max\{\lambda_{\max}(A),\lambda_{\max}(-A)\}.$$
Moreover, $\tr(A)$ (the trace of $A$) is the sum of the eigenvalues
of $A$.
\subsection{Spectral mapping} Let $f:\C\to\C$ be an entire analytic
function with a power-series representation $f(z)\equiv \sum_{n\geq 0}c_n\,z^n$ $(z\in\C)$.
If all $c_n$ are real, the expression:
$$f(A)\equiv \sum_{n\geq 0}c_n A^n \;\;(A\in\C^{d\times d}_{\rm
Herm})$$ corresponds to a map from $\C^{d\times d}_{\rm Herm}$ to
itself. We will sometimes use the so-called spectral mapping
property: \begin{equation}\label{eq:spectralmapping}{\rm spec}f(A) =
f({\rm spec}(A)).\end{equation} By this we mean that the eigenvalues
of $f(A)$ are the numbers $f(\lambda)$ with $\lambda\in{\rm
spec}(A)$. Moreover, the multiplicity of $\xi\in{\rm spec}f(A)$ is
the sum of the multiplicities of all preimages of $\xi$ under $f$
that lie in ${\rm spec}(A)$.

\subsection{The positive-semidefinite order}

We will use the notation $A\succeq 0$ to say that $A$ is {\em positive-semidefinite}, i.e. $A\in\C^{d\times d}_{\rm Herm}$ and its eigenvalues are $A$ are non-negative. This is equivalent to saying that $(v,Av)\geq 0$ for all $v\in\C^d$, where $(\cdot,\cdot\cdot)$ is the standard Euclidean inner product.

If $A,B\in\C^{d\times d}_{\rm Herm}$, we write $A\succeq B$ or
$B\preceq A$ to say that $A-B\succeq 0$. Notice that ``$\succeq$" is
a partial order and that:
\begin{equation}\label{eq:isacone}\forall A,B,A',B'\in \C^{d\times d}_{\rm Herm},\, (A\preceq A')\wedge (B\preceq B')\Rightarrow A+A'\preceq B+B'.\end{equation}
Moreover, spectral mapping \eqnref{spectralmapping} implies that:
\begin{equation}\label{eq:square0}\forall A\in \C^{d\times d}_{\rm Herm},\,A^2\succeq 0.\end{equation}
 We will also need the following simple fact.
\begin{proposition}\label{prop:tracemonotone}For all $A,B,C\in\C^{d\times d}_{\rm Herm}:$
\begin{equation}\label{eq:tracemonotone}(C\succeq 0)\wedge (A\preceq
B)\Rightarrow \tr(AC)\leq \tr(BC).\end{equation}\end{proposition}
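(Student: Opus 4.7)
The plan is to reduce \propref{tracemonotone} to the basic fact that $\tr(DC) \geq 0$ whenever $C, D \in \C^{d\times d}_{\rm Herm}$ are both positive-semidefinite. Granting this reduced claim, set $D \equiv B - A$: the hypothesis $A \preceq B$ is by definition the same as $D \succeq 0$, and linearity of the trace gives $\tr(BC) - \tr(AC) = \tr(DC) \geq 0$, which is exactly the inequality we want.

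To establish the reduced claim, I would invoke the spectral theorem to write $C = \sum_{i=1}^d \lambda_i v_i v_i^*$, where $(v_i)_{i=1}^d$ is an orthonormal eigenbasis and $\lambda_1,\dots,\lambda_d \geq 0$ are the eigenvalues of $C$ (non-negative because $C \succeq 0$). Using linearity and the cyclic property of the trace,
\begin{equation*}
\tr(DC) \;=\; \sum_{i=1}^d \lambda_i \,\tr(D v_i v_i^*) \;=\; \sum_{i=1}^d \lambda_i \,\tr(v_i^* D v_i) \;=\; \sum_{i=1}^d \lambda_i \,(v_i, D v_i).
\end{equation*}
Each summand is non-negative, since $\lambda_i \geq 0$ by construction and $(v, D v) \geq 0$ for every $v \in \C^d$ by the quadratic-form characterization of $D \succeq 0$ recalled in the preliminaries.

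There is no substantive obstacle here: the argument uses only the spectral theorem, linearity and cyclicity of the trace, and the equivalent formulation of positive-semidefiniteness in terms of non-negative quadratic forms. The content of the proposition is simply to promote ``$\succeq$'' from a statement about quadratic forms into a monotonicity statement for the trace functional against non-negative test matrices---a basic bookkeeping tool for the matrix moment calculations that follow.
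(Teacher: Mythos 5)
Your proposal is correct and takes essentially the same approach as the paper: both reduce, by linearity of the trace, to showing that $\tr(DC)\geq 0$ when $D\equiv B-A\succeq 0$ and $C\succeq 0$. The only (cosmetic) difference is in how that reduced claim is verified --- the paper conjugates by the Hermitian square root, writing $\tr(CD)=\tr(D^{1/2}CD^{1/2})$ and noting $D^{1/2}CD^{1/2}\succeq 0$, whereas you expand $C=\sum_i \lambda_i v_iv_i^*$ in its eigenbasis and use $(v_i,Dv_i)\geq 0$; both are equally elementary and complete.
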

\begin{proof}To prove this, assume the LHS and observe that the RHS is
equivalent to $\tr(C\Delta)\geq 0$ where $\Delta\equiv B-A$. By
assumption, $\Delta\succeq 0$, hence it has a Hermitian square root
$\Delta^{1/2}$. The cyclic property of the trace implies:
$$\tr(C\Delta) = \tr(\Delta^{1/2}C\Delta^{1/2}).$$
Since the trace is the sum of the eigenvalues, we will be done once
we show that $\Delta^{1/2}C\Delta^{1/2}\succeq 0$. But, since
$\Delta^{1/2}$ is Hermitian and $C\succeq 0$,
$$\forall v\in\C^d,\, (v,\Delta^{1/2}C\Delta^{1/2}v) =
((\Delta^{1/2}v),C(\Delta^{1/2}v))=(w,Cw)\geq 0\mbox{ (with
$w=\Delta^{1/2}v$),}$$ which shows that
$\Delta^{1/2}C\Delta^{1/2}\succeq 0$, as desired.\end{proof}

\subsection{Probability with matrices}
Assume $(\Omega,\sF,\Prwo)$ is
a probability space and $Z:\Omega\to\C^{d\times d}_{\rm Herm}$ is
measurable with respect to $\sF$ and the Borel $\sigma$-field on
$\C^{d\times d}_{\rm Herm}$ (this is equivalent to requiring that
all entries of $Z$ be complex-valued random variables). $\C^{d\times d}_{\rm Herm}$ is a metrically complete vector space and one can naturally define an expected value $\Ex{Z}\in\C^{d\times
d}_{\rm Herm}$. This turns out to be the matrix $\Ex{Z}\in\C^{d\times d}_{\rm Herm}$ whose $(i,j)$-entry is the
expected value of the $(i,j)$-th entry of $Z$. [Of course, $\Ex{Z}$ is only defined if all entries of $Z$ are integrable, but this will always be the case in this paper.]

The definition of expectations implies that traces and expectations
commute:
\begin{equation}\label{eq:traceex}\tr(\Ex{Z})=\Ex{\tr(Z)}.\end{equation}
Moreover, one can check that the usual product rule is satisfied:
\begin{equation}\label{eq:productrule}\mbox{If $Z,W:\Omega\to\C^{d\times d}_{\rm
Herm}$ are measurable and independent, }
\Ex{ZW}=\Ex{Z}\Ex{W}.\end{equation}

\ignore{\section{The Ahlswede-Winter method}\label{sec:AW} We
present a brief description of the Ahlswede-Winter method
\cite{AhlswedeWinter_StrongConverse} on which our proof of \thmref{rudelson} is based. This is {\em not} the original scenario
considered in \cite{AhlswedeWinter_StrongConverse}, but their
general ideas can be easily adapted, and we will see at the end that they need to be improved.

The argument follows the usual ``Bernstein trick" for
proving concentration. Let $Z_n$ be as in \thmref{rudelson}. Notice
that $Z_n$ and $-Z_{n}$ have the same distribution, all we need are
bounds for the upper tail of $\lambda_{\max}(Z_n)$. A common method
for this kind of problem is to consider the moment generating
function $\Ex{e^{s\lambda_{\max}(Z_n)}}$, $s>0$. Ahlswede and Winter
use the inequality:
\begin{equation}\label{eq:exptraco}e^{s\lambda_{\max}(Z_n)}\leq \sum_{\lambda\in {\rm spec}(Z_n)}e^{s\lambda} = \tr({e^{sZ_n}}).\end{equation}
This might be far from optimal if e.g. $Z_n$ has many eigenvalues
that are close to $\lambda_1(Z_n)$. However, $A\mapsto \tr(e^A)$ is
a ``nicer" function in that it satisfies analogues of the usual
properties of the scalar exponential. In particular, the
Golden-Thompson inequality presented in \eqnref{GTineq} allows the
following step:
$$\Ex{\tr(e^{sZ_n})} \leq  \Ex{\tr(e^{s\eps_nA_n}e^{sZ_{n-1}}}.$$
This is the crucial step in the AW method: the exponential
of a sum of matrices does {\em not} equal a product of
exponentials in general, but the inequalities go the right way under the trace.

The exponentials in the previous inequality are
independent, because the $\eps_i$ are independent. The product rule
\eqnref{productrule} and the commutativity of $\tr(\cdot)$ and
$\Ex{\cdot}$ \eqnref{traceex} imply:
$$\Ex{\tr(e^{sZ_n})} \leq  \tr(\Ex{e^{s\eps_nA_n}}\Ex{e^{sZ_{n-1})}}).$$
We now bound
$\lambda_{\max}(\Ex{e^{sA_n}})$. Assume the $\eps_i$ are standard Gaussian. Notice that
one is {\em not} bounding the (much larger) expected value of $\lambda_{\max}(e^{sA_n})$:
the expectation takes place ``inside" the $\lambda_{\max}$. Spectral
mapping \eqnref{spectralmapping} and an explicit calculation show
that:
$$\lambda_{\max}(\Ex{e^{s\eps_nA_n}}) = \lambda_{\max}(e^{s^2A_n^2/2}) = e^{\frac{s^2\lambda_{\max}(A^2_n)}{2}}.$$
In particular, this implies that $\Ex{e^{s\eps_nA_n}}\preceq
e^{\frac{s^2\lambda_{\max}(A^2_n)}{2}}\,I$, where $I$ is the
$d\times d$ identity matrix. Using \propref{tracemonotone} gives:
\begin{eqnarray*}\Ex{\tr(e^{sZ_n})}&\leq&
e^{\frac{s^2\lambda_{\max}(A_n^2)}{2}}\tr(\Ex{e^{sZ_{n-1}}})
\\ (\dots\mbox{ induction }\dots) &\leq&
e^{\frac{s^2\sum_{i=1}^n\lambda_{\max}(A_i^2)}{2}}\tr(I) = d\,
e^{\frac{s^2\sum_{i=1}^n\lambda_{\max}(A_i^2)}{2}}.\end{eqnarray*}

It is easy to deduce from this inequality a bound of the following
form:
$$\Ex{\|Z_n\|^p}^{1/p}\leq (\sqrt{\ln(2d)} + C_p)\,\frac{\sum_{i=1}^n\lambda_{\max}(A_i^2)}{2} = (\sqrt{\ln(2d)} + C_p)\,\frac{\sum_{i=1}^n\|A_i^2\|}{2}.$$

[In the last equality we used the fact that $A_i^2\succeq 0$.] The
RHS of this formula is always at least as large as the one in
\thmref{rudelson}, and it can be much larger. Consider for instance
the case of a {\em Wigner matrix} where:
$$Z_{n}\equiv \sum_{1\leq i\leq j\leq m}\eps_{ij}A_{ij}$$
with the $\eps_{ij}$ i.i.d. standard Gaussian and each $A_{ij}$ has
ones at positions $(i,j)$ and $(j,i)$ and zeros elsewhere (we take
$d=m$ and $n=\binom{m}{2}$ in this case). Direct calculation
reveals:
$$\left\|\sum_{ij} A_{ij}^2\right\| = \left\|(m-1)I\right\| = m-1\ll
\binom{m}{2} = \sum_{ij} \|A_{ij}\|^2.$$}

\section{Proof of \thmref{rudelson}}\label{sec:mainproof}

\begin{proof}[of \thmref{rudelson}] We wish to control the tail behavior of:
$$\|Z_n\|=\max\{\lambda_{\max}(Z_n),\lambda_{\max}(-Z_n)\}.$$
However, $Z_n$ and $-Z_n$ have the same distribution. It follows that:
\begin{equation*}\forall t\geq 0,\, \Pr{\|Z_n\|\geq t}\leq 2\,\Pr{\lambda_{\max}(Z_n)\geq t}.\end{equation*}
The usual Bernstein trick implies that for all $t\geq 0$,
\begin{equation*}\forall t\geq 0,\,\Pr{\lambda_{\max}(Z_n)\geq
t}\leq\inf_{s>0}e^{-st}\Ex{e^{s\lambda_{\max}(Z_n)}}.\end{equation*}
The function ``$x\mapsto e^{sx}$" is monotone non-decreasing and positive for all $s\geq 0$. It follows from the spectral mapping property \eqnref{spectralmapping} that for all $s\geq 0$, the largest eigenvalue of $e^{sZ_n}$ is $e^{s\lambda_{\max}(Z_n)}$ and all eigenvalues of $e^{sZ_n}$ are non-negative. Using the equality ``trace $=$ sum of eigenvalues" implies that for all $s\geq 0$,
$$\Ex{e^{s\lambda_{\max}(Z_n)}} = \Ex{\lambda_{\max}\left(e^{sZ_n}\right)}\leq \Ex{\tr\left(e^{sZ_n}\right)}.$$
As a result, we have the inequality:
\begin{equation}\label{eq:justlikeAW}\forall t\geq 0,\, \Pr{\|Z_n\|\geq t}\leq 2\inf_{s\geq 0}e^{-st}\Ex{\tr\left(e^{sZ_n}\right)}.\end{equation}
Up to now, our proof has followed Ahlswede and Winter's argument. The next lemma, however, will require new ideas.
\begin{lemma}\label{lem:main}For all $s\in\R$,
$$\Ex{\tr(e^{sZ_n})}\leq \tr\left(e^{\frac{s^2\sum_{i=1}^n A_i^2}{2}}\right).$$\end{lemma}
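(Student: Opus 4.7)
The plan is to induct on $n$ using a strengthened hypothesis. Specifically, for any deterministic Hermitian matrix $H$ (independent of the signs), I would prove by induction on $k\in\{0,1,\dots,n\}$ that
$$\Ex{\tr\bigl(e^{H+sZ_k}\bigr)} \;\le\; \tr\!\Bigl(e^{H + \frac{s^2}{2}\sum_{i=1}^k A_i^2}\Bigr),$$
where $Z_k=\sum_{i=1}^k \eps_i A_i$. The case $k=0$ is the trivial equality $\tr(e^H)=\tr(e^H)$; taking $k=n$ and $H=0$ at the end recovers the lemma. The inductive pass should peel off $\eps_k A_k$ and absorb $\tfrac{s^2}{2}A_k^2$ into the fixed Hermitian matrix for the next round.

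For the inductive step, I would condition on $\eps_1,\dots,\eps_{k-1}$ (so that $Z_{k-1}$ is treated as deterministic), apply the Golden--Thompson inequality \eqnref{GTineq} with $A=H+sZ_{k-1}$ and $B=s\eps_k A_k$, and take the $\eps_k$-expectation. Using the product rule \eqnref{productrule} and \propref{tracemonotone} (valid since $e^{H+sZ_{k-1}}\succeq 0$), together with the bound $\Exp{\eps_k}{e^{s\eps_k A_k}}\preceq e^{s^2 A_k^2/2}$ that follows from spectral mapping \eqnref{spectralmapping} and the elementary scalar inequality $\cosh(x)\le e^{x^2/2}$ in the Rademacher case (with equality for the Gaussian moment generating function), this yields
$$\Exp{\eps_k}{\tr\bigl(e^{H+sZ_k}\bigr)} \;\le\; \tr\!\Bigl(e^{H+sZ_{k-1}}\, e^{s^2 A_k^2/2}\Bigr).$$

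The crux is now to close the induction by converting this product of exponentials into a single exponential of the form $\tr\!\bigl(e^{(H+\frac{s^2}{2}A_k^2)+sZ_{k-1}}\bigr)$. Golden--Thompson itself delivers only the opposite inequality $\tr(e^X e^Y)\ge \tr(e^{X+Y})$, so this step requires genuinely new input. The canonical way out is Lieb's concavity theorem --- the map $M\mapsto \tr\!\bigl(\exp(H+\log M)\bigr)$ is concave on positive-definite $M$ --- which, via Jensen's inequality applied with $M=e^{s\eps_k A_k}$ and background matrix $H+sZ_{k-1}$, plus operator monotonicity of $\log$, delivers exactly the required bound; the inductive step then closes. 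The main obstacle I anticipate is executing this Lieb-type step using only the tools the paper permits: either proving enough of Lieb's concavity elementarily, or finding an alternative combinatorial manipulation of Golden--Thompson (for instance a halving/Cauchy--Schwarz recursion on $Z_n$) that achieves the same consolidation into a single exponential without invoking anything stronger than \eqnref{GTineq}.
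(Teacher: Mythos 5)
Your argument is sound up to the inequality $\Exp{\eps_k}{\tr\bigl(e^{H+sZ_k}\bigr)}\le \tr\bigl(e^{H+sZ_{k-1}}\,e^{s^2A_k^2/2}\bigr)$ (Golden--Thompson \eqnref{GTineq}, the product rule \eqnref{productrule}, \propref{tracemonotone}, and $\Exp{\eps_k}{e^{s\eps_kA_k}}\preceq e^{s^2A_k^2/2}$ are all correctly deployed there). But the step you yourself identify as the crux is a genuine gap: passing from $\tr\bigl(e^{H+sZ_{k-1}}\,e^{s^2A_k^2/2}\bigr)$ to $\tr\bigl(e^{H+sZ_{k-1}+\frac{s^2}{2}A_k^2}\bigr)$ is the reverse of Golden--Thompson and is false in general, and the repair you name --- Lieb's concavity theorem plus Jensen and operator monotonicity of $\log$ --- is precisely the kind of heavy, non-elementary input this paper is written to avoid; you neither prove it nor carry out its application, so the proposal is not a complete proof. (If Lieb's theorem is granted as a black box, your sketch is essentially the later Tropp-style route to such bounds and does work, but it is a strictly stronger tool than anything used here.)

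The paper closes the induction without any consolidation step by pre-loading \emph{all} the compensators into the exponent at the start. Set $D_0\equiv\frac{s^2}{2}\sum_{i=1}^nA_i^2$ and $D_j\equiv D_0+\sum_{i=1}^j\bigl(s\eps_iA_i-\frac{s^2}{2}A_i^2\bigr)$, and prove $\Ex{\tr(e^{D_j})}\le\Ex{\tr(e^{D_{j-1}})}$. Golden--Thompson peels off the \emph{pair} $s\eps_jA_j-\frac{s^2}{2}A_j^2$ as a single summand; since $s\eps_jA_j$ and $-\frac{s^2}{2}A_j^2$ commute (both are functions of $A_j$), one has $\Ex{\exp\bigl(s\eps_jA_j-\tfrac{s^2}{2}A_j^2\bigr)}=\Ex{e^{s\eps_jA_j}}e^{-\frac{s^2}{2}A_j^2}\preceq I$ (exact computation in the Gaussian case, $\cosh x\le e^{x^2/2}$ with spectral mapping \eqnref{spectralmapping} in the Rademacher case), so trace monotonicity simply deletes the peeled factor and what remains is already the single exponential $e^{D_{j-1}}$. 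No recombination of exponentials --- hence no Lieb-type theorem --- is ever needed: the moral is to subtract $\frac{s^2}{2}A_j^2$ at the same moment you peel off $s\eps_jA_j$, rather than trying to add it back into the exponent afterwards.
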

This lemma is proven below. We will now show how it implies Rudelson's bound. Let
$$\sigma^2\equiv \left\|\sum_{i=1}^n A_i^2\right\| = \lambda_{\max}\left(\sum_{i=1}^n A_i^2\right).$$
[The second inequality follows from $\sum_{i=1}^n A_i^2\succeq 0$, which holds because of \eqnref{isacone} and \eqnref{square0}.] We note that:
$$\tr\left(e^{\frac{s^2\sum_{i=1}^n A_i^2}{2}}\right)\leq d\, \lambda_{\max}\left(e^{\frac{s^2\sum_{i=1}^n A_i^2}{2}}\right) = d\, e^{\frac{s^2\sigma^2}{2}}$$
where the equality is yet another application of spectral mapping \eqnref{spectralmapping} and the fact that ``$x\mapsto e^{s^2x/2}$" is monotone increasing. We deduce from the Lemma and \eqnref{justlikeAW} that:
\begin{equation}\label{eq:notjustAW}\forall t\geq 0,\, \Pr{\|Z_n\|\geq t}\leq 2d\,\inf_{s\geq 0}e^{-st+\frac{s^2 t^2}{2}} = 2d\,e^{-\frac{t^2}{2\sigma^2}}.\end{equation}
This implies that for any $p\geq 1$,
\begin{eqnarray*}\frac{1}{\sigma^p}\Ex{(\|Z_n\|-\sqrt{2\ln(2d)}\sigma)_+^p} &=& \ignore{ p\int_{0}^{+\infty}t^{p-1}\Pr{\frac{1}{\sigma}(\|Z_n\|-\sqrt{2\ln(2d)}\sigma)_+\geq t}\,dt\\ &=&} p\int_{0}^{+\infty}t^{p-1}\Pr{\|Z_n\|\geq (\sqrt{2\ln(2d)}+t)\sigma}\,dt\\ \mbox{(use\eqnref{notjustAW})}&\leq& 2pd\int_{0}^{+\infty} t^{p-1}e^{-\frac{(t+\sqrt{2\ln(2d)})^2}{2}}\,dt\\ &\leq & 2pd\int_{0}^{+\infty} t^{p-1}e^{-\frac{t^2+2\ln(2d)}{2}}\,dt\;=C^p_p\end{eqnarray*}
Since $0\leq \|Z_n\|\leq \sqrt{2\ln(2d)}\sigma +
(\|Z_n\|-\sqrt{2\ln(2d)}\sigma)_+,$ this implies the $L^p$ estimate
in the Theorem. The bound ``$C_p\leq c\sqrt{p}$" is standard and we
omit its proof.\end{proof}

To finish, we now prove \lemref{main}.

\begin{proof}[of \lemref{main}] Define $D_{0}\equiv\sum_{i=1}^n s^2A_i^2/2$ and
$$D_{j} \equiv D_0 +  \sum_{i=1}^j \left(s\eps_iA_i - \frac{s^2A_i^2}{2}\right)\;\; (1\leq j\leq n).$$We will
prove that for all $1\leq j \leq n$:
\begin{equation}\label{eq:decreaseswithj}\Ex{\tr\left(\exp\left(D_j\right)\right)}\leq
\Ex{\tr\left(\exp\left(D_{j-1}\right)\right)}.\end{equation} Notice
that this implies $\Ex{\tr(e^{D_n})}\leq \Ex{\tr(e^{D_0})}$, which is the precisely the Lemma. To prove \eqnref{decreaseswithj}, fix $1\leq j\leq n$. Notice that
$D_{j-1}$ is independent from $s\eps_jA_j-s^2A_j^2/2$ since the
$\{\eps_i\}_{i=1}^n$ are independent. This implies that:
\begin{eqnarray*}\Ex{\tr\left(\exp\left(D_j\right)\right)} &=&
\Ex{\tr\left(\exp\left(D_{j-1} + s\eps_jA_j - \frac{s^2A_j^2}{2}\right)\right)}\\
\mbox{(use Golden-Thompson \eqnref{GTineq})}&\leq &
\Ex{\tr\left(\exp\left(D_{j-1}\right)
\exp\left(s\eps_jA_j-\frac{s^2A_j^2}{2}\right)\right)}\\
\mbox{($\tr(\cdot)$ and $\Ex{\cdot}$ commute, \eqnref{traceex})}
 &=&
\tr\left(\Ex{\exp\left(D_{j-1}\right)\exp\left(s\eps_jA_j-\frac{s^2A_j^2}{2}\right)}\right).\\
\mbox{(use product rule, \eqnref{productrule})}
 &=& \tr\left(\Ex{\exp\left(D_{j-1}\right)}\Ex{\exp\left(s\eps_jA_j-\frac{s^2A_j^2}{2}\right)}\right).\end{eqnarray*}

By the monotonicity of the trace \eqnref{tracemonotone} and the fact
that $\exp\left(D_{j-1}\right)\succeq 0$ (which follows from
\eqnref{spectralmapping}), we will be done once we show that:
\begin{equation}\label{eq:finall}\Ex{\exp\left(s\eps_jA_j-\frac{s^2A_j^2}{2}\right)}\preceq
I.\end{equation} The key fact is that $s\eps_j A_j$ and $-s^2A_j^2/2$
{\em always} commute, hence the exponential of the sum is the
product of the exponentials. Applying \eqnref{productrule} and
noting that $e^{-s^2A_j^2/2}$ is constant, we see that:
$$\Ex{\exp\left(s\eps_jA_j-\frac{s^2A_j^2}{2}\right)}=\Ex{\exp\left(s\eps_jA_j\right)}e^{-\frac{s^2A_j^2}{2}}.$$
 In the Gaussian case, an explicit calculation shows that
$\Ex{\exp\left(s\eps_jA_j\right)}=e^{s^2A^2_j/2}$, hence \eqnref{finall}
holds. In the Rademacher case, we have:
$$\Ex{\exp\left(s\eps_jA_j\right)}e^{-\frac{s^2A_j^2}{2}}=f(A_j)$$
where $f(z) = \cosh(sz)e^{-s^2z^2/2}$. It is a classical fact that
$0\leq \cosh(x)\leq e^{x^2/2}$ for all $x\in\R$ (just compare the
Taylor expansions); this implies that $0\leq f(\lambda)\leq 1$ for
all eigenvalues of $A_j$. Using spectral mapping
\eqnref{spectralmapping}, we see that:
$${\rm spec}f(A_j)=f({\rm spec}(A_j))\subset [0,1],$$
which implies that $f(A_j)\preceq I$. This proves \eqnref{finall} in
this case and finishes the proof of \eqnref{decreaseswithj} and of the Lemma.\end{proof}

\subsection{Remarks on the original AW approach}\label{sec:improvement}

A direct adaptation of the original argument of Ahlswede and Winter \cite{AhlswedeWinter_StrongConverse} would lead to an inequality of the form:
$$\Ex{\tr(e^{sZ_n})}\leq \tr\left(\Ex{e^{s\eps_n A_n}}\Ex{e^{sZ_{n-1}}}\right).$$
One sees that:
$$\Ex{e^{s\eps_n A_n}}\preceq e^{\frac{s^2A_n^2}{2}}\preceq e^{\frac{s^2\|A_n^2\|}{2}}\,I.$$
However, only the second inequality seems to be useful, as there is no obvious relationship between
$$\tr\left(e^{\frac{s^2A_n^2}{2}}\Ex{e^{sZ_{n-1}}}\right)$$
and
$$\tr\left(\Ex{e^{s\eps_{n-1}A_{n-1}}}\Ex{e^{sZ_{n-2}+\frac{s^2A_n^2}{2}}}\right),$$
which is what we would need to proceed with induction. [Note that Golden-Thompson \eqnref{GTineq} cannot be undone and fails for three summands, \cite{Thompson_GTIneq}.] The best one can do with the second inequality is:
$$\Ex{\tr(e^{sZ_n})}\leq d\,e^{\frac{s^2\sum_{i=1}^n\|A_i\|^2}{2}}.$$
This would give a version of \thmref{rudelson} with
$\sum_{i=1}^n\|A_i\|^2$ replacing $\|\sum_{i=1}^nA_i^2\|$. This modified result is always worse than the actual Theorem, and can be dramatically so. For instance, consider
the case of a {\em Wigner matrix} where:
$$Z_{n}\equiv \sum_{1\leq i\leq j\leq m}\eps_{ij}A_{ij}$$
with the $\eps_{ij}$ i.i.d. standard Gaussian and each $A_{ij}$ has
ones at positions $(i,j)$ and $(j,i)$ and zeros elsewhere (we take
$d=m$ and $n=\binom{m}{2}$ in this case). Direct calculation
reveals:
$$\left\|\sum_{ij} A_{ij}^2\right\| = \left\|(m-1)I\right\| = m-1\ll
\binom{m}{2} = \sum_{ij} \|A_{ij}\|^2.$$

We note in passing that neither approach is sharp in this case, as $\|\sum_{ij}\eps_{ij}A_{ij}\|$ concentrates around $2\sqrt{m}$ \cite{FurediKomlos_RandomMatrices}.

\section{Concentration for rank-one operators}\label{sec:concentrationHilbert}
In this section we prove \lemref{concentrationHilbert}.

\begin{proof}[of \lemref{concentrationHilbert}] Let
$$\phi(s) \equiv \Ex{\exp\left(s\left\|\frac{1}{n}\sum_{i=1}^nY_iY_i^* -
\Ex{Y_1Y_1^*}\right\|\right)}.$$ We will show below that:
\begin{equation}\label{eq:goalconcentration}\forall
s\geq 0,\, \phi(s) \leq 2n\,e^{2M^2s^2/n} \phi(2M^2s^2/n).\end{equation}
By Jensen's inequality, $\phi(2Ms^2/n)\leq \phi(s)^{2M^2s/n}$ whenever
$2M^2s/n\leq 1$, hence \eqnref{goalconcentration} implies:
$$\forall 0\leq s\leq n/2M^2,\, \phi(s) \leq (2n)^{\frac{1}{1-2M^2s/n}}
e^{\frac{2M^2s^2}{n-2M^2s}}.$$ Since
$$\forall s\geq 0,\, \Pr{\left\|\frac{1}{n}\sum_{i=1}^nY_iY_i^* -
\Ex{Y_1Y_1^*}\right\|\geq t}\leq e^{-st}\phi(s),$$ the Lemma then
follows from the choice
$$s\equiv \frac{tn}{8M^2 + 4M^2t}$$
and a few simple calculations. [Notice that $2M^2s/n\leq 1/2$ with this choice, hence $1/(1-2M^2s/n)\leq 2$.]

To prove \eqnref{goalconcentration}, we begin with symmetrization
(see e.g. \cite{LedouxTalagrand_BanachSpaces}):
$$\phi(s)\leq
\Ex{\exp\left(2s\left\|\frac{1}{n}\sum_{i=1}^n\eps_iY_iY_i^*\right\|\right)},$$
where $\{\eps_i\}_{i=1}^n$ is a Rademacher sequence independent of
$Y_1,\dots,Y_n$. Let $\sS$ be the (random) span of $Y_1,\dots,Y_n$ and
$\tr_{\sS}$ denote the trace operation on linear operators mapping
$\sS$ to itself. Following the argument in \thmref{rudelson}, we
notice that:
$$\Ex{\exp\left(2s\left\|\frac{1}{n}\sum_{i=1}^n\eps_iY_iY_i^*\right\|\right)\mid
Y_1,\dots,Y_n}\leq
2\Ex{\tr_{\sS}\left\{\exp\left(\frac{2s}{n}\sum_{i=1}^n\eps_iY_iY_i^*\right)\right\}\mid
Y_1,\dots,Y_n}.$$ \lemref{main} implies:
\begin{eqnarray*}\Ex{\exp\left(2s\left\|\frac{1}{n}\sum_{i=1}^n\eps_iY_iY_i^*\right\|\right)\mid
Y_1,\dots,Y_n} &\leq& 2\tr_{\sS}\left\{\exp\left(\frac{2s^2}{n^2}\sum_{i=1}^n(Y_iY_i^*)^2\right)\right\}\\
&\leq &
2n\exp\left(\left\|\frac{2s^2}{n^2}\sum_{i=1}^n(Y_iY_i^*)^2\right\|\right),\end{eqnarray*}
using spectral mapping \eqnref{spectralmapping}, the equality ``trace $=$ sum of eigenvalues" and the fact that
$\sS$ has dimension $\leq n$. A quick calculation shows that $0\preceq (Y_iY_i^*)^2 =
|Y_i|^2\,Y_iY_i^* \preceq M^2Y_iY_i^*$, hence \eqnref{isacone} implies:
$$0\preceq \frac{2s^2}{n^2}\sum_{i=1}^n(Y_iY_i^*)^2\preceq
\frac{2M^2s^2}{n}\, \left(\frac{1}{n}\sum_{i=1}^n Y_iY_i^*\right).$$
Therefore:
$$\left\|\frac{2s^2}{n^2}\sum_{i=1}^n(Y_iY_i^*)^2\right\| \leq
\frac{2M^2s^2}{n}\left\|\frac{1}{n}\sum_{i=1}^n
Y_iY_i^*\right\|\leq
\frac{2M^2s^2}{n}\left\|\frac{1}{n}\sum_{i=1}^n Y_iY_i^* -
\Ex{Y_1Y_1^*}\right\| + \frac{2M^2s^2}{n}.$$ [We used
$\|\Ex{Y_1Y_1^*}\|\leq 1$ in the last inequality.] Plugging this into the conditional
expectation above and integrating, we obtain \eqnref{goalconcentration}:
$$\phi(s)\leq 2n\, \Ex{\exp\left(\frac{2M^2s^2}{n}\left\|\frac{1}{n}\sum_{i=1}^n Y_iY_i^* -
\Ex{Y_1Y_1^*}\right\| + \frac{2M^2s^2}{n}\right)} =
2ne^{2M^2s^2/n}\,\phi(2M^2s^2/n).$$\end{proof}

\section{Proof sketch for Golden-Thompson
inequality}\label{sec:GTineq}

As promised in the Introduction, we sketch an elementary proof of inequality \eqnref{GTineq}. We will need the {\em Trotter-Lie formula}, a simple consequence of the Taylor formula for $e^X$:
\begin{equation}\label{eq:trotter}\forall A,B\in\C^{d\times d}_{\rm Herm},\, \lim_{n\to +\infty}(e^{A/n}e^{B/n})^n =
e^{A+B}.\end{equation}The second ingredient is the inequality:
\begin{equation}\label{eq:decreasing}\forall k\in\N, \forall X,Y\in \C^{d\times d}_{\rm Herm}\,:\, X,Y\succeq 0 \Rightarrow \tr((XY)^{2^{k+1}})\leq \tr((X^2Y^2)^{2^{k}}).\end{equation}
This is proven in of \cite{Golden_GTIneq} via an argument using the existence of positive-semidefinite square-roots for
positive-semidefinite matrices, and the Cauchy-Schwartz inequality
for the standard inner product over $\C^{d\times d}$. Iterating \eqnref{decreasing}
implies:
\begin{equation*}\forall X,Y\in \C^{d\times d}_{\rm Herm}\,:\, X,Y\succeq 0 \Rightarrow \tr((XY)^{2^{k}})\leq \tr(X^{2^k}Y^{2^k}).\end{equation*}
Apply this to $X=e^{A/2^k}$ and $Y=e^{B/2^k}$ with
$A,B\in\C^{d\times d}_{\rm Herm}$. Spectral mapping \eqnref{spectralmapping} implies $X,Y\succeq 0$ and we deduce:
\begin{equation*}\tr((e^{A/2^k}e^{B/2^k})^{2^{k}})\leq \tr(e^Ae^B).\end{equation*}
Inequality \eqnref{GTineq} follows from letting $k\to +\infty$,
using \eqnref{trotter} and noticing that $\tr(\cdot)$ is continuous.

\end{document}